\documentclass{article}

\usepackage{amsmath,amsthm,amssymb,amsfonts}

\usepackage{verbatim}
\usepackage{graphicx}

\usepackage{stmaryrd} 
\usepackage{hyperref}

\usepackage[colorinlistoftodos]{todonotes}

\newtheorem{thm}{Theorem}

\newtheorem{prop}[thm]{Proposition}

\newtheorem{assert}[thm]{Assertion}

\newtheorem{remarks}[thm]{Remark}

\newtheorem{definition}[thm]{Definition}

\newtheorem{exl}[thm]{Example}

\numberwithin{thm}{section}

\newcommand{\adj}{\leftrightarrow}

\newcommand{\adjeq}{\leftrightarroweq}

\def\Z{{\mathbb Z}}
\def\N{{\mathbb N}}

\begin{document}
\title{Remarks on Fixed Point Assertions in Digital Topology, 12}
\author{Laurence Boxer
\thanks{Department of Computer and Information Sciences, Niagara University, NY 14109, USA
and  \newline
Department of Computer Science and Engineering, State University of New York at Buffalo \newline
email: boxer@niagara.edu
\newline
ORCID: 0000-0001-7905-9643
}
}

\date{ }
\maketitle

\begin{abstract} The topic of fixed points in digital metric spaces has drawn yet more
publications with assertions that are incorrect, incorrectly proven, trivial, or
incoherently stated. We discuss publications with 
bad assertions concerning fixed points of self-functions on digital images, as in some of
our previous papers.

MSC: 54H25

Key words and phrases: digital topology, digital image,
fixed point, digital metric space
\end{abstract}

\section{Introduction}
{\bf Fixed points in digital topology} - this is a topic that has inspired some beautiful
results, as well as many assertions that 
are incorrect, incorrectly
proven, trivial, or not presented coherently. Here, we continue the work
of~\cite{BxSt19, Bx19, Bx19-3, Bx20, Bx22, BxBad6, BxBad7, BxBad8, BxBad9, BxBad10, BxBad11}
as we discuss several other papers with 
assertions that merit at least one of the 
descriptions in the previous sentence. 
Papers drawing our disapproval in
the current work have all come to our attention since acceptance for publication
of~\cite{BxBad11}.

The study of the {\bf coincidence points}
of two or more functions has often been 
linked to the study of fixed points. Among the
questions we consider are some concerned with
coincidence points.

\section{Preliminaries}
Much of the material in this section is 
quoted or paraphrased from~\cite{BxBad10}.

Notations:
\begin{itemize}
    \item $\Z$ represents the set of integers.
    \item $\N$ represents the set of natural numbers.
    \item $\#X$ is the number of distinct
    members of the set~$X$.
\end{itemize}

\subsection{Adjacencies, connectedness, 
continuity, fixed point}

A {\em digital image} is a pair $(X,\kappa)$,
where $X \subset \Z^n$ for some $n \in \N$ and
$\kappa$ is an adjacency relation on pairs of
members of~$X$.

In a digital image $(X,\kappa)$, if
$x,y \in X$, we use the notation
$x \adj_{\kappa}y$ to
mean $x$ and $y$ are $\kappa$-adjacent; we may write
$x \adj y$ when $\kappa$ can be understood. 
We write $x \adjeq_{\kappa}y$, or $x \adjeq y$
when $\kappa$ can be understood, to
mean 
$x \adj_{\kappa}y$ or $x=y$.

The most commonly used adjacencies in the study of digital images 
are the $c_u$ adjacencies. These are defined as follows.
\begin{definition}
\label{cu-adj-Def}
Let $X \subset \Z^n$. Let $u \in \Z$, $1 \le u \le n$. Let 
$x=(x_1, \ldots, x_n),~y=(y_1,\ldots,y_n) \in X$. Then $x \adj_{c_u} y$ if 
\begin{itemize}
    \item $x \neq y$,
    \item for at most $u$ distinct indices~$i$,
    $|x_i - y_i| = 1$, and
    \item for all indices $j$ such that $|x_j - y_j| \neq 1$ we have $x_j=y_j$.
\end{itemize}
\end{definition}

\begin{definition}
\label{path}
{\rm (See \cite{Khalimsky})} 
    Let $(X,\kappa)$ be a digital image. Let
    $x,y \in X$. Suppose there is a set
    $P = \{x_i\}_{i=0}^n \subset X$ such that
$x=x_0$, $x_i \adj_{\kappa} x_{i+1}$ for
$0 \le i < n$, and $x_n=y$. Then $P$ is a
{\em $\kappa$-path} (or just a {\em path}
when $\kappa$ is understood) in $X$ from $x$ to $y$,
and $n$ is the {\em length} of this path.
\end{definition}

\begin{definition}
{\rm \cite{Rosenfeld}}
A digital image $(X,\kappa)$ is
{\em $\kappa$-connected}, or just {\em connected} when
$\kappa$ is understood, if given $x,y \in X$ there
is a $\kappa$-path in $X$ from $x$ to $y$. The {\rm $\kappa$-component of~$x$ in~$X$} is the
maximal $\kappa$-connected subset
of~$X$ containing~$x$.
\end{definition}

\begin{definition}
{\rm \cite{Rosenfeld, Bx99}}
Let $(X,\kappa)$ and $(Y,\lambda)$ be digital
images. A function $f: X \to Y$ is 
{\em $(\kappa,\lambda)$-continuous}, or
{\em $\kappa$-continuous} if $(X,\kappa)=(Y,\lambda)$, or
{\em digitally continuous} when $\kappa$ and
$\lambda$ are understood, if for every
$\kappa$-connected subset $X'$ of $X$,
$f(X')$ is a $\lambda$-connected subset of $Y$.
\end{definition}

\begin{thm}
{\rm \cite{Bx99}}
A function $f: X \to Y$ between digital images
$(X,\kappa)$ and $(Y,\lambda)$ is
$(\kappa,\lambda)$-continuous if and only if for
every $x,y \in X$, if $x \adj_{\kappa} y$ then
$f(x) \adjeq_{\lambda} f(y)$.
\end{thm}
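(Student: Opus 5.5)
We prove the two directions separately, working directly from the definitions of $\kappa$-path, $\kappa$-connectedness and $(\kappa,\lambda)$-continuity given above.

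\emph{Forward direction.} Suppose $f$ is $(\kappa,\lambda)$-continuous and $x \adj_{\kappa} y$. The two-point set $X'=\{x,y\}$ is $\kappa$-connected, since $x,y$ is a $\kappa$-path of length~$1$ from $x$ to $y$, and the reverse gives a path from $y$ to $x$; trivial paths of length~$0$ handle a point to itself. By continuity, $f(X')=\{f(x),f(y)\}$ is $\lambda$-connected.
\begin{itemize}
\item If $f(x)=f(y)$, then $f(x) \adjeq_{\lambda} f(y)$ holds at once.
\item Otherwise, there is a $\lambda$-path inside the two-element set $\{f(x),f(y)\}$ from $f(x)$ to $f(y)$. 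Take its first step that moves from $f(x)$ to the other point; since the path lies in that two-element set, this step shows $f(x)\adj_{\lambda} f(y)$.
\end{itemize}

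\emph{Reverse direction.} Suppose adjacency is preserved in the weak sense $f(x)\adjeq_\lambda f(y)$ whenever $x\adj_\kappa y$. Let $X'\subset X$ be $\kappa$-connected, and take $a,b\in f(X')$ with $a=f(x)$ and $b=f(y)$ for some $x,y\in X'$. Choose a $\kappa$-path $x=x_0,\dots,x_n=y$ lying in $X'$; connectedness of the subimage $X'$ means the path can be taken inside $X'$.

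Then consecutive images satisfy $f(x_i)\adjeq_\lambda f(x_{i+1})$. Form a new sequence from $f(x_0),\dots,f(x_n)$ by deleting each term equal to its predecessor. In the resulting sequence consecutive terms are distinct and $\lambda$-adjacent, so it is a $\lambda$-path in $f(X')$ from $a$ to $b$. Hence $f(X')$ is $\lambda$-connected, and $f$ is continuous.

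The only delicate point is this removal of repeated consecutive images. It is needed because $\adj_\lambda$ is irreflexive (the $c_u$ definition demands $x\neq y$), so repeated points are not allowed as steps of a path. Likewise, in the forward direction one must observe that a path confined to a two-point set forces adjacency of those two points. Beyond these observations the argument is routine.
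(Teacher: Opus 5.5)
Your proof is correct. The forward direction properly extracts $f(x) \adj_{\lambda} f(y)$ from a $\lambda$-path confined to a two-point set, and the reverse direction correctly collapses runs of repeated images so that the result is a genuine $\lambda$-path in $f(X')$ from $f(x)$ to $f(y)$. The paper gives no proof of its own here: it quotes the result from \cite{Bx99}, and your direct argument from the definitions is the standard one for this characterization.
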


A {\em fixed point} of a function $f: X \to X$ 
is a point $x \in X$ such that $f(x) = x$. 

As a convenience, if $x$ is a point in the domain of a function $f$, we will often
abbreviate ``$f(x)$" as ``$fx$".

\subsection{Digital metric spaces}
\label{DigMetSp}
A {\em digital metric space}~\cite{EgeKaraca-Ban} is a triple
$(X,d,\kappa)$, where $(X,\kappa)$ is a digital image and $d$ is a metric on $X$. The
metric is usually taken to be the Euclidean
metric or some other $\ell_p$ metric; 
alternately, $d$ might be taken to be the
shortest path metric. These are defined
as follows.
\begin{itemize}
    \item Given 
          $x = (x_1, \ldots, x_n) \in \Z^n$,
          $y = (y_1, \ldots, y_n) \in \Z^n$,
          $p > 0$, $d$ is the $\ell_p$ metric
          if \[ d(x,y) =
          \left ( \sum_{i=1}^n
          \mid x_i - y_i \mid ^ p
          \right ) ^ {1/p}. \]
          Note the special cases: if $p=1$ we
          have the {\em Manhattan metric}; if
          $p=2$ we have the 
          {\em Euclidean metric}.
    \item \cite{ChartTian} If $(X,\kappa)$ is a 
          connected digital image, 
          $d$ is the {\em shortest path metric}
          if for $x,y \in X$, $d(x,y)$ is the 
          length of a shortest
          $\kappa$-path in $X$ from $x$ to $y$.
\end{itemize}


We say a metric space $(X,d)$ is {\em uniformly discrete}
if there exists $\varepsilon > 0$ such that
$x,y \in X$ and $d(x,y) < \varepsilon$ implies $x=y$.

\begin{remarks}
\label{unifDiscrete}
If $X$ is finite or  
\begin{itemize}
\item {\rm \cite{Bx19-3}}
$d$ is an $\ell_p$ metric, or
\item $(X,\kappa)$ is connected and $d$ is 
the shortest path metric,
\end{itemize}
then $(X,d)$ is uniformly discrete.

For an example of a digital metric space
that is not uniformly discrete, see
Example~2.10 of~{\rm \cite{Bx20}}.
\end{remarks}

We say a sequence $\{x_n\}_{n=0}^{\infty}$ is 
{\em eventually constant} if for some $m>0$, 
$n>m$ implies $x_n=x_m$.
The notions of convergent sequence and complete digital metric space are often trivial, 
e.g., if the digital image is uniformly 
discrete, as noted in the following, a minor 
generalization of results 
of~\cite{HanBan,BxSt19}.

\begin{prop}
\label{eventuallyConst}
{\rm \cite{Bx20}}
If $(X,d)$ is a uniformly discrete metric space,
then any Cauchy sequence in $X$
is eventually constant, and $(X,d)$ is a complete metric space.
\end{prop}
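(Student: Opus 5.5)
The plan is to work directly from the definition of uniform discreteness. Fix $\varepsilon > 0$ such that $x,y \in X$ and $d(x,y) < \varepsilon$ imply $x=y$. Let $\{x_n\}_{n=0}^{\infty}$ be a Cauchy sequence in $X$. Applying the Cauchy condition with this $\varepsilon$ gives an index $m>0$ such that $d(x_n,x_k) < \varepsilon$ for all $n,k \ge m$. In particular, for every $n>m$ we get $d(x_n,x_m) < \varepsilon$, and uniform discreteness forces $x_n = x_m$. So the sequence is eventually constant, exactly in the sense of the definition preceding the proposition. Note that we use the same $\varepsilon$ in both the Cauchy condition and the discreteness condition; that is the whole trick.

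For completeness, we must show every Cauchy sequence converges to a point of $X$. By the first part, a Cauchy sequence satisfies $x_n = x_m$ for all $n > m$. Hence for any $\delta > 0$ and all $n>m$ we have $d(x_n,x_m) = 0 < \delta$, so $x_n \to x_m \in X$. Therefore $(X,d)$ is complete.

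There is no real obstacle here. The only point needing care is the bookkeeping of indices: choose $m$ from the Cauchy condition, and compare each later term against $x_m$ itself rather than against consecutive terms. Comparing consecutive terms would only give $x_{n+1}=x_n$ for $n \ge m$, which still suffices by induction, but comparing directly with $x_m$ is cleaner. Also, the Cauchy condition may be stated with $n,k > N$ rather than $\ge$; we simply take $m = N+1$ (ensuring $m>0$) so the strict inequalities in the definition of ``eventually constant'' are met.
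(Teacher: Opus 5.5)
Your proof is correct and is the standard argument: using the uniform-discreteness constant $\varepsilon$ in the Cauchy condition forces $x_n = x_m$ for all $n > m$, and an eventually constant sequence converges in $X$, which gives completeness. The paper does not prove this proposition itself but cites earlier work, and this same short argument is essentially what the cited result rests on.
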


\subsection{On fixed points and coincidence 
points}
A function $f: X \to X$ has a {\em fixed point} $x_0 \in X$ if
$fx_0 = x_0$. If $X$ is a topological space or if $(X,\kappa)$ is
a digital image, $X$ or, respectively, $(X,\kappa)$ has the
{\em fixed point property} (FPP) if for every continuous
(respectively, $\kappa$-continuous) $f: X \to X$ has a fixed point.
But the FPP turns out to be trivial in digital topology, as shown by the following.

\begin{thm}
\label{BEKLLthm}
    {\rm \cite{BEKLL}} A digital image $(X,\kappa)$ has the FPP if
    and only if $\#X = 1$.
\end{thm}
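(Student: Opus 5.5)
Both directions of Theorem~\ref{BEKLLthm} can be proved directly from the definition of the FPP and the characterization of continuity quoted from~\cite{Bx99}.

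\emph{If $\#X = 1$.} Then $X=\{x_0\}$, and the only self-map is the constant map to $x_0$. It is trivially $\kappa$-continuous and fixes $x_0$, so $(X,\kappa)$ has the FPP.

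\emph{If $\#X>1$.} We produce a $\kappa$-continuous $f: X \to X$ with no fixed point, splitting into cases according to connectedness.

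\emph{Case 1: $X$ is not $\kappa$-connected.} Pick points $a,b$ in different $\kappa$-components of $X$. Define $f(x)=b$ for $x$ in the component of $a$, and $f(x)=a$ otherwise. Adjacent points lie in the same component, so for any $x \adj_{\kappa} y$ we get $f(x)=f(y)$. By the theorem of~\cite{Bx99}, $f$ is continuous. It has no fixed point, since points of $a$'s component go to $b$, and the remaining points go to $a$, which is not among them.

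\emph{Case 2: $X$ is $\kappa$-connected with $\#X>1$.} Pick an edge $a \adj_{\kappa} b$; one exists because $X$ is connected and has at least two points. Let $d$ be the shortest path metric on $X$; if $X$ is infinite, components and paths still make sense. Define
\[ f(x) = \begin{cases} b & \text{if } d(x,a) \text{ is even},\\ a & \text{if } d(x,a) \text{ is odd}. \end{cases} \]
Continuity: if $x \adj y$ then $|d(x,a)-d(y,a)| \le 1$. If the two distances have the same parity, then $f(x)=f(y)$. Otherwise $\{f(x),f(y)\}=\{a,b\}$, and these are adjacent. In either case $f(x) \adjeq_{\kappa} f(y)$, so $f$ is continuous.

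Fixed points: we need $f(x)\neq x$ for every $x$. The only possible fixed points are $a$ and $b$, since $f$ takes only those two values. We have $f(a)=b$ because $d(a,a)=0$ is even, and $f(b)=a$ because $d(b,a)=1$ is odd. Hence $f$ has no fixed point, and in fact it swaps $a$ and $b$.

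The only step needing care is the continuity check in Case~2. That is why $f$ is built as a parity map to an adjacent pair, instead of an arbitrary retraction onto $\{a,b\}$. Cases~1 and~2 together show that $\#X>1$ implies $(X,\kappa)$ lacks the FPP, which completes the proof.
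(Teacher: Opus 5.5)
Your proof is correct. In Case~1, adjacent points lie in a common component. So $f$ is constant on adjacent pairs, and it sends every point to a point outside its own side. In Case~2, the triangle inequality gives $|d(x,a)-d(y,a)| \le 1$ whenever $x \adj_{\kappa} y$, so $f(x) \adjeq_{\kappa} f(y)$, and $f$ swaps $a$ and $b$.

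The paper gives no proof of this theorem; it only quotes it from~\cite{BEKLL}. So there is no argument in the paper to compare your route against.

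Your last paragraph overstates the need for the parity construction. Fix any $a \in X$. Let $b$ be a $\kappa$-neighbor of $a$ if $a$ has one, and any other point of $X$ otherwise. Put $f(a)=b$ and $f(x)=a$ for $x \neq a$. The only adjacent pairs on which $f$ is nonconstant are pairs $\{a,y\}$, and their images $b$ and $a$ are adjacent. Clearly $f$ has no fixed point. This single map covers both of your cases and needs neither components nor the shortest path metric. Your version is equally valid, just longer.
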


However, the study of conditions that 
lead to the existence of fixed points 
has been a fruitful topic of research.

Given functions $f,g: X \to Y$, we say
$x_0 \in X$ is a {\em coincidence point 
of}~$f$ and~$g$ if $f(x_0) = g(x_0)$.
Perhaps because, when $Y=X$, a common fixed point
of~$f$ and~$g$ is a coincidence point,
coincidence points and fixed points are
sometimes studied together.
For more on the existence of,
and properties of, coincidence points 
in digital topology, see \cite{AbdullahiEtal20}.

\subsection{The digital Banach contraction principle}

We have the following.
\begin{definition}
    Let $(X,d,\kappa)$ be a digital metric
    space. Let $f: X \to X$ satisfy, for
    all $x,y \in X$ and some $c$ satisfying
    $0 < c < 1$,
    \[ d(fx,fy) < c \cdot d(x,y). \]
    Then $f$ is a {\em digital contraction}.
\end{definition}

The following is sometimes called
the digital Banach contraction principle.
\begin{thm}
\label{EgeKaracaAssertion}
    {\rm \cite{EgeKaraca-Ban}} A digital
    contraction has a unique fixed point.
\end{thm}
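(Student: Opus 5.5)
The plan is to run the classical Banach iteration argument, using Proposition~\ref{eventuallyConst} in place of an appeal to completeness. First, a remark on how the hypothesis must be read. Taken literally with $x=y$, the strict inequality $d(fx,fy) < c\cdot d(x,y)$ says $0<0$, so no function would satisfy it. I will therefore read the condition as required only for $x\neq y$, or equivalently in its non-strict form $d(fx,fy)\le c\cdot d(x,y)$ for all $x,y$. Either reading gives, for all $x,y\in X$,
\[ d(fx,fy)\le c\cdot d(x,y). \]

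\emph{Uniqueness.} Suppose $p$ and $q$ are fixed points with $p\neq q$. Then $d(p,q)=d(fp,fq)< c\cdot d(p,q)<d(p,q)$, which is impossible. So $f$ has at most one fixed point.

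\emph{Existence.} Fix $x_0\in X$ and set $x_{n+1}=fx_n$.
\begin{itemize}
\item By induction, $d(x_{n+1},x_n)\le c^n d(x_1,x_0)$.
\item By the triangle inequality and the geometric series, for $m>n$,
\[ d(x_m,x_n)\le \frac{c^n}{1-c}\, d(x_1,x_0), \]
so $\{x_n\}$ is a Cauchy sequence.
\item If $(X,d)$ is uniformly discrete (for instance when $X$ is finite, when $d$ is an $\ell_p$ metric, or when $d$ is the shortest path metric on a connected image, by Remark~\ref{unifDiscrete}), Proposition~\ref{eventuallyConst} makes $\{x_n\}$ eventually constant. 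Then for some $m$ we have $x_{m+1}=x_m$, that is, $fx_m=x_m$, and $x_m$ is a fixed point.
\end{itemize}
In this setting the proof uses no limits and no continuity of $f$. A more direct alternative in the uniformly discrete case: if $x_{n+1}\neq x_n$ for all $n$, then $\varepsilon\le d(x_{n+1},x_n)\le c^n d(x_1,x_0)\to 0$, a contradiction.

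\emph{Main obstacle.} The existence step needs the Cauchy sequence to converge, and a general digital metric space need not be uniformly discrete (see Example~2.10 of~\cite{Bx20}), and so need not be complete. There I would have to assume $(X,d)$ is complete. With that assumption, let $x^*=\lim x_n$. The contraction inequality gives $d(fx_n,fx^*)\le c\cdot d(x_n,x^*)\to 0$, so $x_{n+1}=fx_n\to fx^*$, and uniqueness of limits gives $fx^*=x^*$. Without completeness or uniform discreteness, I expect the statement can fail. I would therefore check whether a non-complete example, modeled on Example~2.10 of~\cite{Bx20}, admits a fixed-point-free contraction, which would show the assertion needs this hypothesis.
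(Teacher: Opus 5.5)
Your proposal is correct and follows essentially the paper's route: the paper gives no argument of its own and only says that a correct proof, in~\cite{BxBad10}, needs the extra hypothesis that $d$ be uniformly discrete — exactly your iteration argument, with Proposition~\ref{eventuallyConst} forcing the Cauchy sequence of iterates to be eventually constant. Your suspicion about the general case is right: on the non-complete space $(\N,d,c_1)$ of Remark~\ref{nonStdRem}, the map $f(n)=2n$ satisfies $d(fx,fy)=\frac{1}{2}d(x,y)$ but has no fixed point.
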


The proof of Theorem~\ref{EgeKaracaAssertion}
in~\cite{EgeKaraca-Ban}
was shown to be incorrect in~\cite{BxBad10}, 
A correct proof, with the additional 
requirement that the metric be uniformly
discrete, was given in~\cite{BxBad10}.

\section{\cite{MishraEtal}'s alleged coincidence point}
\begin{figure}
    \centering
    \includegraphics[width=5in]{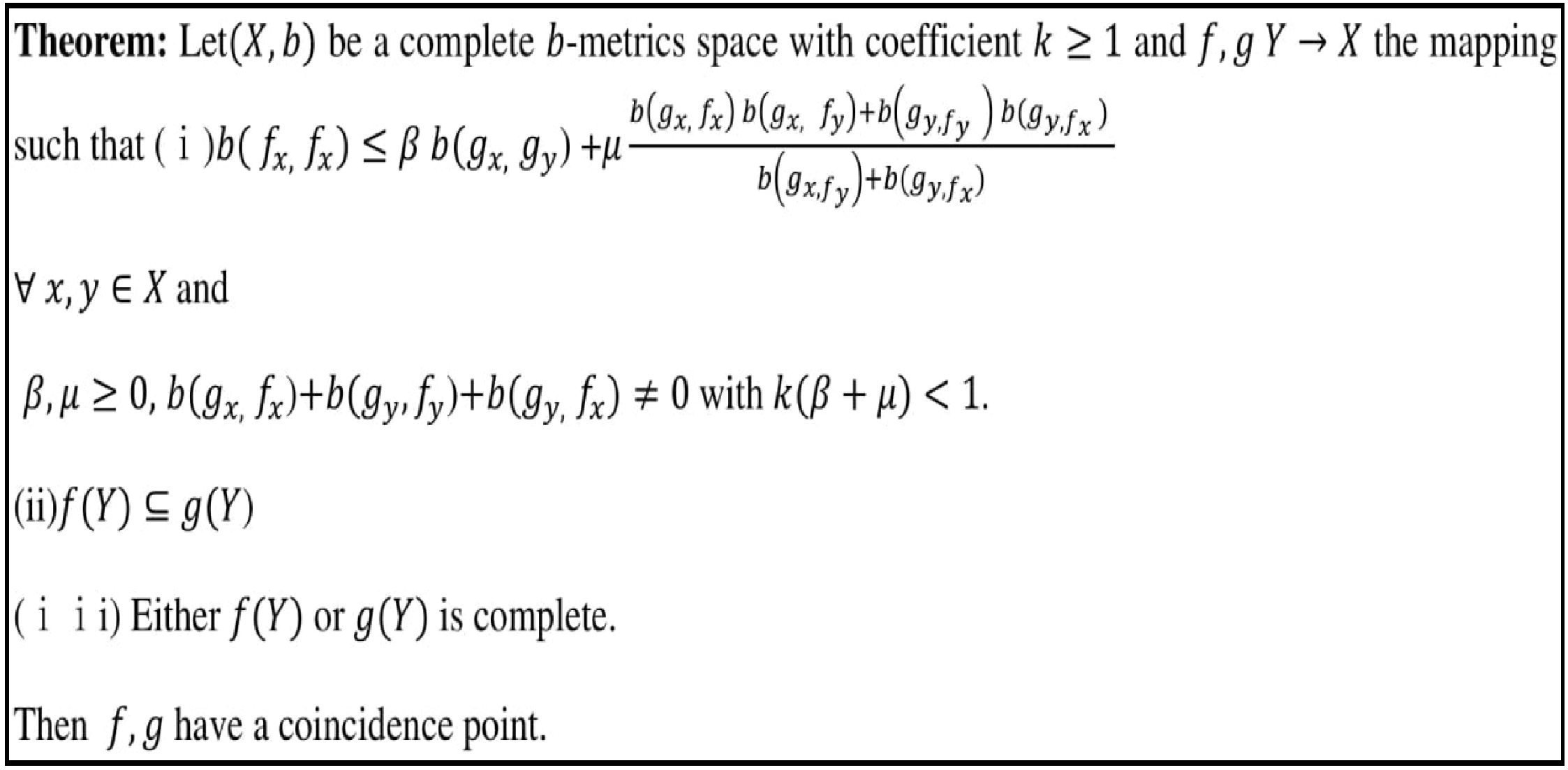}
    \caption{The "theorem" of~\cite{MishraEtal}}
    \label{fig:MishraEtalThm}
\end{figure}

If $f,g: X \to Y$ and $x_0 \in X$ is such
that $f(x_0) = g(x_0)$, then $x_0$ is
a {\em coincidence point} of~$f$ and~$g$.
The study of coincidence points is often
related to the study of fixed points.

The only assertion presented as new 
in~\cite{MishraEtal} is shown
in Figure~\ref{fig:MishraEtalThm}. Note
the following.
\begin{itemize}
    \item The left side of inequality (i) is
        $b(fx, fx) = 0$. This suggests that
        the inequality will be true whenever
        the right side is defined.
    \item The hypotheses of the assertion 
    include that the right side of 
    inequality~(i) is valid for all 
    $x,y \in Y$. The assertion is that there
          is a coincidence point, but suppose
          $x_0 \in Y$ is a coincidence point
          of~$f$ and~$g$. Then the right
          side of inequality~(i), when 
          evaluated for $x=y=x_0$, has a 
          denominator of~$0$ in its second
          term. Thus the very assertion that
          the authors seek to establish
          yields an undefined statement.
          
          We conclude that 
          therefore~\cite{MishraEtal}
          should have been rejected or 
          required to undergo major revision.
    \item The paper is further flawed, e.g.,
          by introduction of undefined points $z_n,~z_{n+1}$ in the third line
          of the ``proof" of the assertion,
          and by a Conclusion section that
          seems composed of random phrases.   
\end{itemize}

Indeed, the assertion of
Figure~\ref{fig:MishraEtalThm} is false, even
if modified so that inequality~(i) is
assumed true only when its statement is
defined. Failure of the assertion is shown in 
the following.

\begin{exl}
Let $Y$ be a digital simple closed curve 
of~$n \ge 4$ points $\{x_i\}_{i=0}^{n-1}$ 
indexed circularly. Let $f$ and $g$ be the
self maps on~$X$ given by
\[ f(x_i)= x_{(i+1) \mod n}~~~\mbox{\rm and}~~~
g(x_i) = x_{(i+2) \mod n}.
\]
\end{exl}
Clearly, $f$ and $g$ do not have a coincidence
point. Further, if $gx_i = fx_j$ then 
$(i+2) \mod n = (j+1) \mod n$, or 
$(i+1) \mod n =j$; while if $gx_j = fx_i$
then $(j+2) \mod n = (i+1) \mod n$, or
$j = (i-1) \mod n$. Therefore, the second
term of the right side of~(i)
never has a denominator of~$0$. Thus, 
the left side of~(i) is $0$
and the right side of~(i)
is nonnegative, for all $x,y \in Y$,
so~(i) is satisfied. Further, we have
$f(Y)=Y=g(Y)$ and, by 
Proposition~\ref{eventuallyConst},
$(Y,b)$ is complete. Thus, the hypotheses
of the assertion are satisfied, and its
conclusion is false.

\section{\cite{OzEtal}'s alleged common fixed
points}
\cite{OzEtal} is another much-flawed paper.

\subsection{\cite{OzEtal}'s ``Theorem" 4}
We have the following, paraphrased from the
original to simplify notation and presentation.

\begin{definition}
    {\rm \cite{OzEtal}} 
    \label{rational}
    Let $(X,d, \kappa)$ be a digital 
    metric space. Let $S,T: X \to X$. Suppose
    there exist positive
    $\alpha, \beta, \gamma, \theta, \phi$
    such that 
    $\alpha + \beta + \gamma + \theta + \phi < 1$ and, for all $c,y \in X$,
    \[
        d(Sc,Ty) \le \alpha d(c,y) +
        \beta \frac{d(c,Sc) d(y,Ty)}{d(c,y)}
    + \gamma \frac{d(y,Tc) d(c,Ty)}{d(c,y)}
    \]
    \begin{equation}
    \label{OzEtal4.1ineq}
    + \theta \frac{d(c,Sc) d(y,Ty)}{d(c,Ty) + d(c,y) + d(y,Tc)} 
    + \phi \frac{d(y,Ty) d(c,Ty)}{d(c,Sy)+d(y,Ty)}
    \end{equation}     
    for all $c,y \in X$ such that $c \neq y$.
    Then  $(S,T)$ is a pair of {\em rational 
    type contraction mappings}.
\end{definition}

The following is stated as ``Theorem" 4
of~\cite{OzEtal}.

\begin{assert}
\label{OzEtal4}
    Let $(X,d,\kappa)$ be a digital metric
    space. Let $S,T: X \to X$. Suppose
    \begin{itemize}
        \item $T(X) \subset S(X)$,
        \item $S$ is $\kappa$-continuous, and
        \item $(S,T)$ is a pair of new type
        rational contractions.
        \item $S$ and $T$ commute with each
        other.
    \end{itemize}
    Then $S$ and $T$ have a unique common
    fixed point.
\end{assert}

Note it is not explained in~\cite{OzEtal} 
if ``rational type mappings" are the same as
``new type rational contractions". We assume
they are the same in the following discussion.

The argument in~\cite{OzEtal} offered as proof
of this assertion assumes incorrectly that a
digital metric space is complete; that
a digital metric space can fail to be
complete is shown in
Remark~\ref{nonStdRem} below.

\begin{exl}
{\rm \cite{Bx20}}
\label{nonStdMetric}
Let $X = \N \cup \{0\}$, 
\[ d(x,y) = \left \{ \begin{array}{ll}
            0 & \mbox{if } x=0=y; \\
            1/x & \mbox{if } x \neq 0 = y; \\
            1/y & \mbox{if } x = 0 \neq y; \\
            |1/x - 1/y| & \mbox{if } x \neq 0 \neq y.
            \end{array} \right .
\]
Then $d$ is a metric, and $\lim_{n \to \infty} d(n,0) = 0$. However,
the sequence $x_n=n+1$ satisfies 
\[ \lim_{n \to \infty} d(0, x_n) = 0 \neq c_0.
\]
\end{exl}

\begin{remarks}
\label{nonStdRem}
    If $(X,d)$ are as in Example~\ref{nonStdMetric},
    it follows that $(X \setminus \{0\},d,c_1)
    = (\N,d,c_1)$
    is a digital metric space that is 
    not complete.
\end{remarks}

Perhaps a more serious error in the ``proof"
given in~\cite{OzEtal} is discussed as
follows. The ``proof" starts:
Let $c_0 \in X$ and, inductively,
    \begin{equation}
    \label{S-onEven,T-onOdd}
        c_{2j+1} = Sc_{2j},~~~~ c_{2j+2} = Tc_{2j+1}.
    \end{equation}
\begin{remarks}
\label{unsimplifiable}
    However, there is no simplification given,
either by hypothesis or by derivation,
for the result of applying~$S$ to a member of
the sequence that has an odd index, or
for the result of applying~$T$ to a member of
the sequence that has an even index.
\end{remarks}

The authors correctly derive that there is a
constant~$\lambda$, $0 < \lambda < 1$, such that
    \begin{equation} 
    \label{oddStartBdd}
    d(c_{2j+1},c_{2j+2}) \le
       \lambda d(c_{2j}, c_{2j+1}).
    \end{equation}
They proceed to claim that similarly,
\[  d(c_{2j+2},c_{2j+3}) \le
       \lambda d(c_{2j+1}, c_{2j+2}).
\]
But the latter claim does not follow. An
attempt to mimic the argument for~(\ref{oddStartBdd}) fails, due to
the observation of Remark~\ref{unsimplifiable}.
This is seen as follows. 
\[ d(c_{2j+2}, c_{2j+3}) = d(Tc_{2j+1}, Sc_{2j+2})
   = d(Sc_{2j+2}, Tc_{2j+1}).
\]
Inequality~(\ref{OzEtal4.1ineq}) implies
that the latter expression is bounded above
by an expression in which 
\[ \begin{array}{cll}
\underline{\rm the~term~with~coefficient} & ~~~~~ & \underline{\rm has~the~unsimplifiable~subexpression} \\
\gamma & & Tc_{2j+2} \\
\theta & & Tc_{2j+2} \\
\phi & & Sc_{2j+3}
\end{array}
\]

We conclude that Assertion~\ref{OzEtal4}
is unproven.

\subsection{\cite{OzEtal}'s Example 1}
Example 1 of \cite{OzEtal} claims that the
functions $S,T: ([0,1]_{\Z}, d) \to ([0,1]_{\Z}, d)$
given by
\[ S(c) = (2/3)c^2 + 1/3, ~~~~~ T(c) = (2/5)c^2 + 1/5,
\]
satisfy the hypotheses of Assertion~\ref{OzEtal4},
where $d(y,c) = |y-c|$. This claim is false, 
since~$S$ and~$T$ are not integer-valued.

\subsection{\cite{OzEtal}'s ``corollaries"}
\begin{figure}
    \centering
    \includegraphics[width=5in]{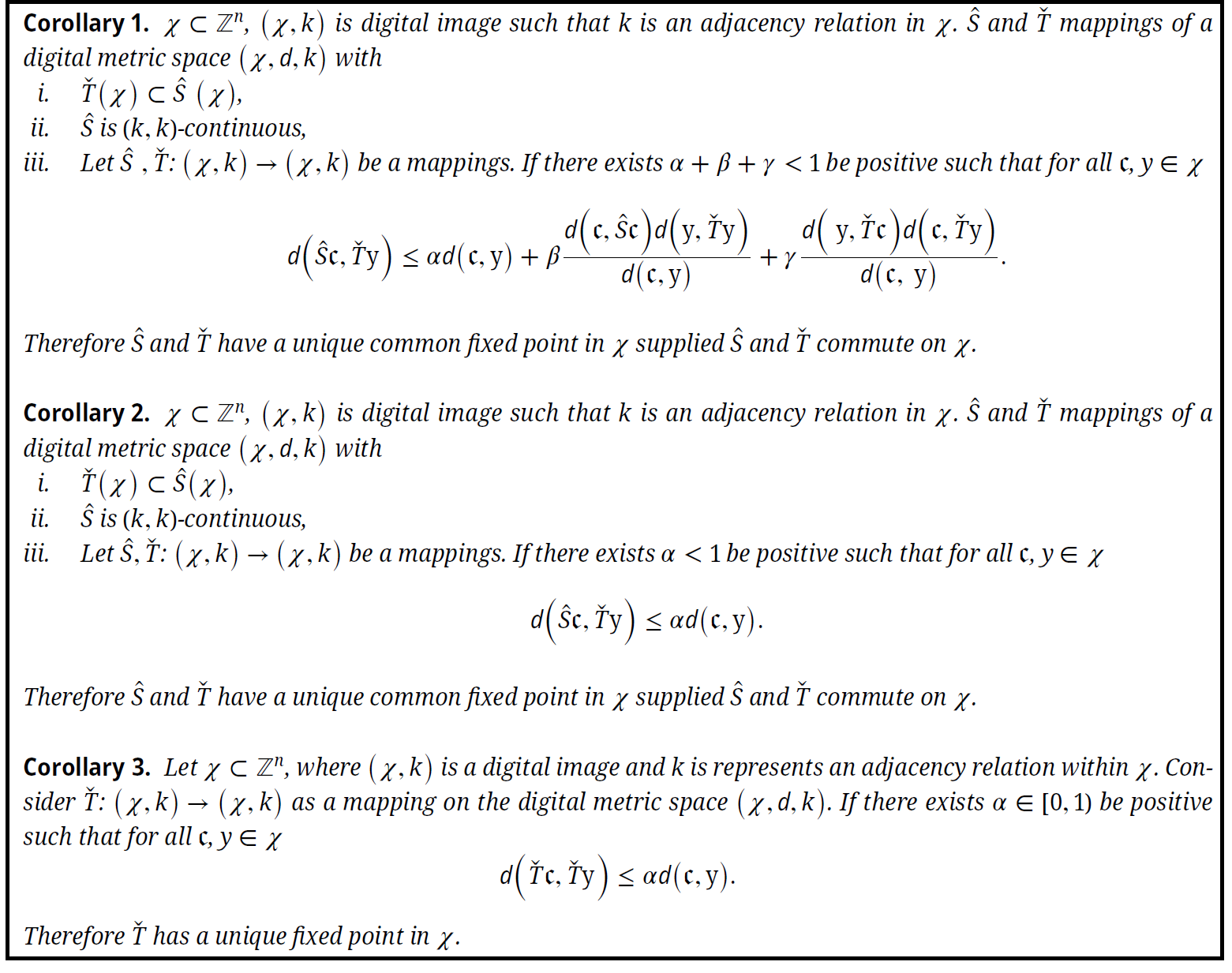}
    \caption{`Corollaries" 1, 2, 3 of \cite{OzEtal}}
    \label{fig:OzEtalCors}
\end{figure}

``Corollaries" 1, 2, and 3 of~\cite{OzEtal} 
(see Figure~\ref{fig:OzEtalCors}) are
intended to follow from the discredited
Assertion~\ref{OzEtal4}; therefore, these
assertions are also unproven. 

By using some of the assumptions 
of ``Corollary"~2, we can obtain a
coincidence-points result. Notice in the
following that we do not require that
$Tx \subset S(Y)$, that $S$ is continuous, or that~$S$ and~$T$ commute; however, we 
require~$d$ to be uniformly discrete.

\begin{thm}
    Let $(X,d)$ be a metric space and
    $S,T: X \to X$. Suppose~$d$ is
    uniformly discrete. Suppose
    for some $\alpha$ such that
              $0 \le \alpha < 1$ and
              all $c,y \in X$,
              \[ d(Sc,Ty) \le \alpha d(c,y).\]
    Then $S$ and $T$ have at least one
    coincidence point.
\end{thm}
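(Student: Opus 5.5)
Fix any $x \in X$ and use the contractive inequality to compare $S$ and $T$ at the same point and at nearby points. The simplest route: take $c = y = x$ in the hypothesis. This gives
\[ d(Sx,Tx) \le \alpha d(x,x) = 0, \]
so $Sx = Tx$ for every $x \in X$. Hence every point of $X$ is a coincidence point, which in particular gives at least one as long as $X \neq \emptyset$. I would state the nonemptiness of $X$ as implicit, since the statement concerns self-maps of a metric space and a coincidence point is asserted to exist.

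This direct argument does not even use uniform discreteness. Since the statement lists it, I would also sketch the intended iterative argument as a check. Define $x_0 \in X$ arbitrary and $x_{n+1} = Sx_n$. Since $S = T$ by the first step, we get $d(x_{n+1},x_{n+2}) = d(Sx_n, Tx_{n+1}) \le \alpha\, d(x_n,x_{n+1})$. So the sequence is Cauchy by the usual geometric-series estimate.

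Uniform discreteness and Proposition~\ref{eventuallyConst} then make the sequence eventually constant, say $x_n = x_m$ for $n > m$. Thus $Sx_m = x_{m+1} = x_m$, and $x_m$ is a fixed point of $S$, hence also of $T$. It is therefore a coincidence point, indeed a common fixed point.

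I expect no real obstacle. The one point worth flagging is that the hypothesis, being required for all $c,y$ including $c=y$, already forces $S=T$. That observation makes the theorem essentially trivial, and I would remark on it in the paper's critical spirit.
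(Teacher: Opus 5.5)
Your proof is correct, and it is genuinely more direct than the paper's.

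The paper fixes $c_0,y_0\in X$ and runs two separate orbits $c_{n+1}=Sc_n$, $y_{n+1}=Ty_n$. It shows $d(c_n,y_n)\le\alpha^n d(c_0,y_0)\to 0$ and invokes uniform discreteness to get $c_n=y_n$ for large $n$. Only then does it conclude $d(Sc_n,Tc_n)=d(Sc_n,Ty_n)\le\alpha\, d(c_n,y_n)=0$.

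Since $y_n=c_n$ there, that final step is exactly your observation (the hypothesis evaluated on the diagonal $c=y$), applied at one specially manufactured point. You apply it at an arbitrary point instead. So the iteration and uniform discreteness are unnecessary for the stated conclusion, and you get the sharper fact that the hypothesis forces $S=T$, making every point of $X$ a coincidence point.

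Your second, iterative sketch is not the paper's argument: the paper never produces a fixed point. Your single-orbit argument shows what uniform discreteness actually buys here, namely a common fixed point of $S=T$. That fixed point is even unique, since two fixed points $a,b$ satisfy $d(a,b)=d(Sa,Tb)\le\alpha\, d(a,b)$.

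The paper's longer route is also no more robust than yours, because both depend essentially on the case $c=y$. If the inequality were required only for $c\neq y$, as in the paper's rational-type definition, the statement would fail. For example, take $X=\{0,1\}$ with the usual metric, $S$ the identity, $T$ the swap, and $\alpha=0$.
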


\begin{proof}
The assertion is trivial if $\#X = 1$, so
assume $\#X > 1$.

    Let $c_0,y_0 \in X$. For $n>0$ let
    $c_{n+1} = Sc_n$ and $y_{n+1} = Ty_n$. We
    have
    \[ d(c_{n+1},y_{n+1}) =
       d(Sc_n, Ty_n) \le
       \alpha d(c_n, y_n).
    \]
    From this, an easy induction leads to
    \[ d(c_{n+1},y_{n+1}) \le
    \alpha^{n+1} d(c_0, y_0) 
    \to_{n \to \infty} 0.
    \]
    Since $d$ is uniformly discrete, it
    follows that there exists $m > 0$ such 
    that $n \ge m$ implies $d(c_n,y_n) = 0$,
    or $c_n = y_n$. Then
    \[ d(Sc_n,Tc_n) = d(Sc_n, Ty_n) \le
      \alpha d(c_n,y_n) = 0
    \]
    so $c_n$ is a
    coincidence point for~$S$ and~$T$.
\end{proof}

\section{\cite{SinghEtal}'s alleged fixed points}
\subsection{Dubious assertions}
In this section, we mention dubious assertions 
that \cite{SinghEtal} cites. These include:
\begin{itemize}
    \item The assertion of~\cite{EgeKaraca-Ban}
    that a digital contraction map is digitally
    continuous. This assertion is disproved
    in Example~4.1 of~\cite{BxSt19}.
    \item ``Theorem" 3.1 is stated
    without an implication.
\end{itemize}

\subsection{\cite{SinghEtal}'s ``Theorem" 4.1}
\begin{figure}
    \centering
    \includegraphics[width=5in]{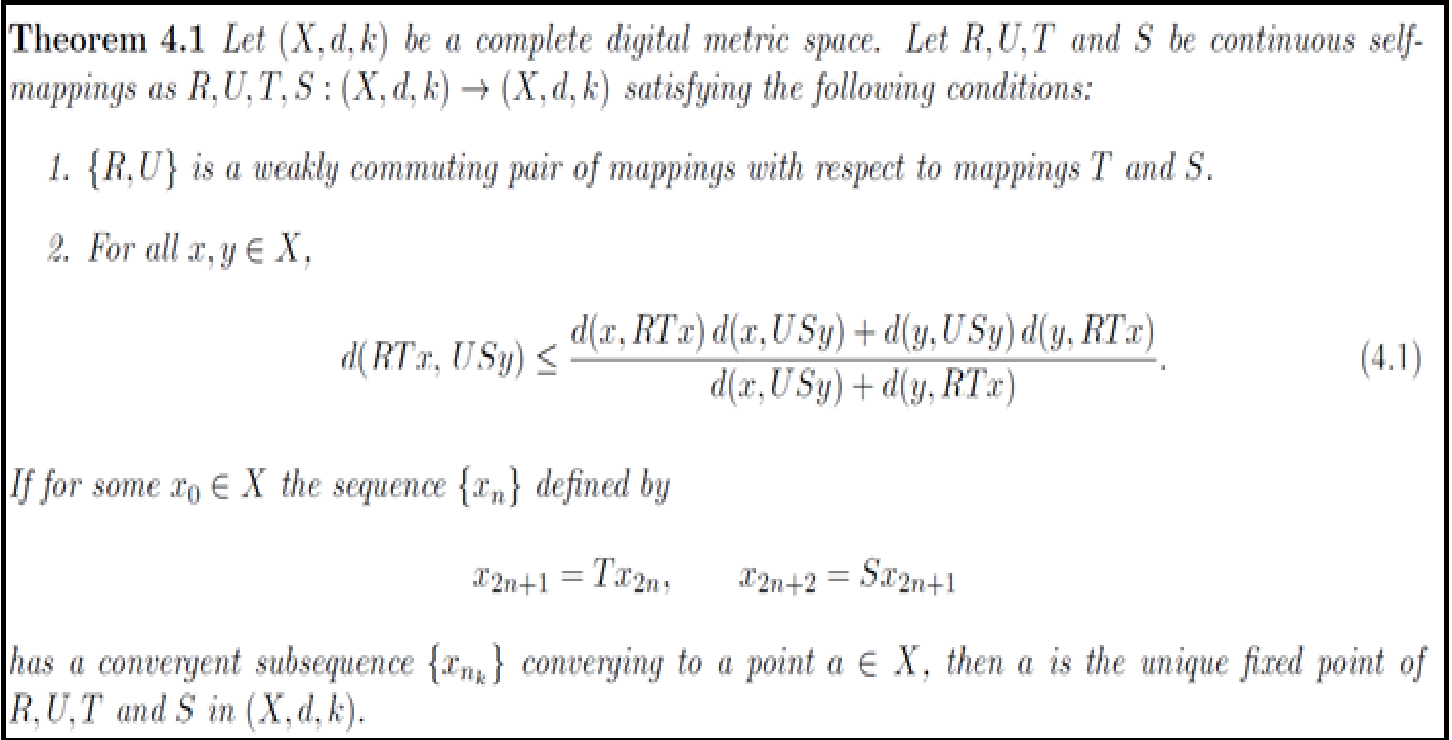}
    \caption{``Theorem" 4.1 of \cite{SinghEtal}}
    \label{fig:SinghEtal4.1}
\end{figure}

``Theorem" 4.1 of \cite{SinghEtal} is shown
in Figure~\ref{fig:SinghEtal4.1}. The
following deficiencies are present.
\begin{itemize}
    \item Although the paper has defined a
    ``weakly commuting pair of mappings" at
    its Definition~2.3, it has not defined 
    what the assertion calls a
    ``weakly commuting pair of mappings
    with respect to mappings $T$ and $S$".
   \item If $a$ is a common fixed point of
         $R,U,T,S$, the denominator in the
         right side of (4.1) is equal to 0
         when $x=y=a$,
         so (4.1) is undefined in this case.
         This contradicts the hypothesis that
         (4.1) holds for all $x,y \in X$.
\end{itemize}

Perhaps the most important error in this 
assertion is found in its ``proof". On
page~5 of~\cite{SinghEtal}, the authors
assume that $a \in X$ is a fixed point
of~$R \circ T$. They derive the trivial inequality
\[ d(a,RTa) \le d(a,RTa),
\]
and claim this inequality contradicts
the assumption that $a$ is a fixed point
of~$R \circ T$; clearly, it does not.

We must conclude that “Theorem” 4.1 is not
proven.

\subsection{``Corollaries" 4.1 and 4.2 of \cite{SinghEtal}}
\begin{figure}
    \centering
    \includegraphics[width=5in]{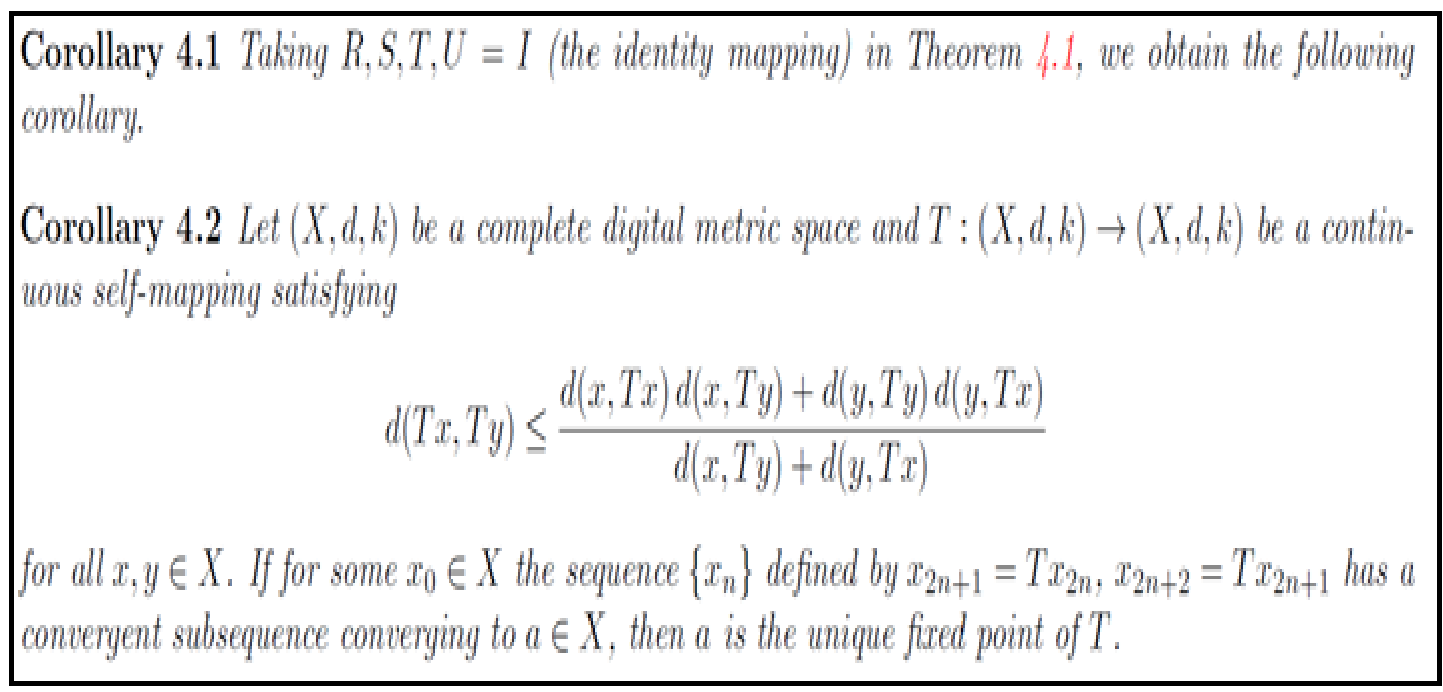}
    \caption{``Corollaries" 4.1 and 4.2 of \cite{SinghEtal}}
    \label{fig:SinghEtalCors4-1-4-2}
\end{figure}

``Corollaries" 4.1 and 4.2 of \cite{SinghEtal}
are shown in Figure~\ref{fig:SinghEtalCors4-1-4-2}. Since they are stated as corollaries of
the now-discredited ``Theorem"~4.1, 
``Corollaries" 4.1 and 4.2 of \cite{SinghEtal}
are now also discredited.

Since ``Corollary" 4.1 takes $R,S,T,U$ to
be identity maps and the map~$T$ of
``Corollary" 4.2 is not required to be
an identity map, it seems ``the following
corollary" of~4.1 does not refer to 4.2. Thus
we conclude 4.1 is not completely stated.

``Corollary" 4.2 states that its inequality
must hold for all $x,y \in X$. However, for
$x=a=y$ where~$a$ is a fixed point of~$T$,
the denominator of the right side of the
inequality is 0, hence the inequality is
undefined.

\subsection{Example 1 of \cite{SinghEtal}}
Example 1 of \cite{SinghEtal} has 
$X=\{3,4,5\}$ and $d$ as the Euclidean metric.
\begin{itemize}
    \item At the bottom of page~6, we see in the distance matrix~$D$ the claim that 
    $d(5,2) = 1$; 
clearly, this should be $d(5,4) = 1$. 
\item In the 3rd bullet of page~7, we have
the claim that 0 and 2 are non-adjacent.
However, $0 \not \in X$.
\end{itemize}

\subsection{Example 2 of \cite{SinghEtal}}
In Example 2 of \cite{SinghEtal},
$X = [0,9]_{\Z}$, and it is claimed that
$R,U,T,S: X \to X$, where
\[ R(x)=x-1,~~~~U(x)=2x,~~~~S(x)=x+1,~~~
T(x)= \lfloor x/2 \rfloor.\]
But this claim is false, since 
$R(0) \not \in X$, 
$U([5,9]_{\Z}) \cap X = \emptyset$, and
$S(9) \not \in X$.

\subsection{``Theorem" 4.2 of \cite{SinghEtal}}
\begin{figure}
    \centering
    \includegraphics[width=5in]{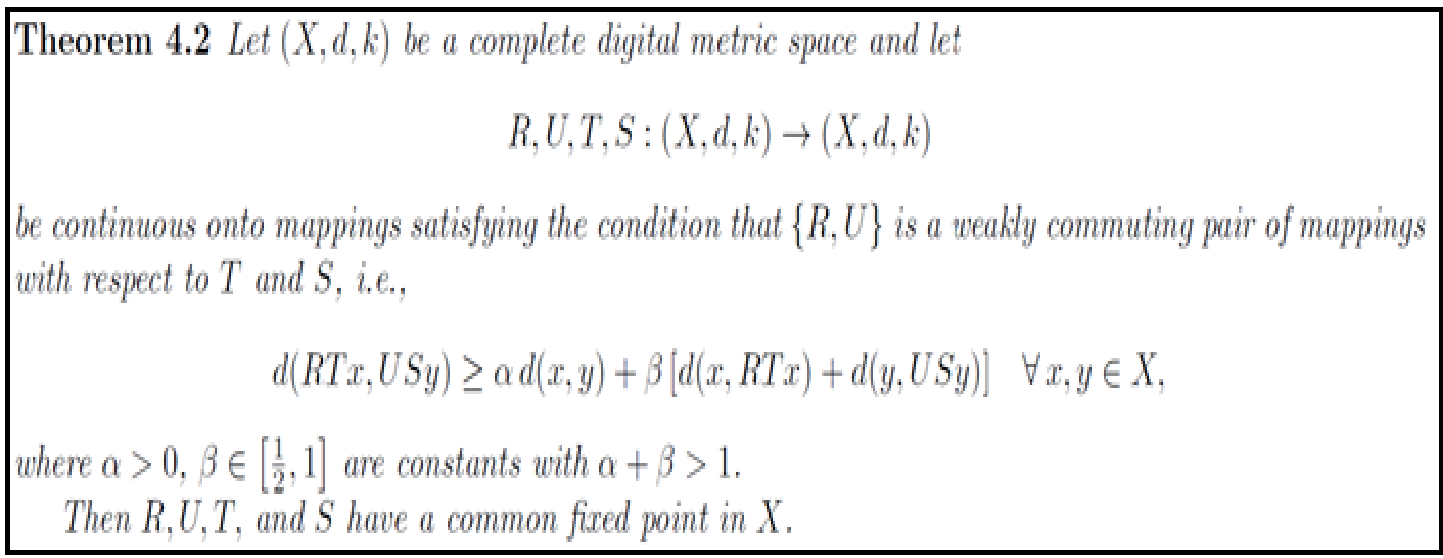}
    \caption{``Theorem" 4.2 of \cite{SinghEtal}}
    \label{fig:SinghEtal4.2}
\end{figure}

This ``Theorem" is shown in
Figure~\ref{fig:SinghEtal4.2}. In their
attempt to prove it, the authors reach,
near the end of their argument,
the existence of a common fixed point~$a$
of $R \circ T$ and $U \circ S$. They then
claim, with no justification, that~$a$
is a common fixed point of $R,U,T,S$. We must
conclude that this assertion is unproven.

\subsection{Example 3 of \cite{SinghEtal}
}
In this example, $X = \{3,4,5\}$ and
$T(x)=S(x)=R(x)=U(x) = 1$ for all
$x \in X$. The example is flawed by the following.
\begin{itemize}
    \item None of $T,S,R,U$ map $X$ into $X$.
    \item Were $1 \in X$, it would be
    obvious that $1$ is the only common
    fixed point of functions defined as above.
    \item The authors conclude that 3 is
    the common fixed point, which is false
    for a constant function that takes the
    value~1.
\end{itemize}

\section{Further remarks}
We paraphrase~\cite{BxBad8}:
\begin{quote}
We have discussed several papers that seek to advance
fixed point assertions for digital metric spaces.
Most of their assertions are incorrect or incorrectly proven. 
The authors, the referees who
approved their publication, and, perhaps, 
``predatory journals"
that accept payment for publication regardless of quality - share responsibility for
deficient ``mathematics".
\end{quote}

\end{document}